\documentclass[12pt,a4paper]{amsart}
\usepackage[english]{babel}
\usepackage[utf8]{inputenc}
\usepackage{tikz}
\usepackage{lipsum}
\usepackage{amssymb}
\usepackage{float}
\newtheorem{Theorem}{Theorem}[section]
\newtheorem{Example}[Theorem]{Example}
\newtheorem{Lemma}[Theorem]{Lemma}
\newtheorem{Definition}[Theorem]{Definition}
\newtheorem{Corollary}[Theorem]{Corollary}

\newcommand{\Bg}[2]{\mathrm{Bg}_{#2}(#1)}

\title[Schubert varieties indexed by involutions]{Smoothness of Schubert varieties indexed by involutions in finite simply laced types}

\author{Axel Hultman \and Vincent Umutabazi} 

\address{Department of Mathematics, Link\"oping University, SE-581 83 Link\"oping, Sweden}

\email{axel.hultman@liu.se}
\email{vincent.umutabazi@liu.se}

\begin{document}

\begin{abstract}
We prove that in finite, simply laced types, every Schubert variety indexed 
by an involution which is not the  
longest element of some parabolic subgroup is singular.
\end{abstract}

\maketitle

\section{Introduction} \label{sec:Introduction}
Let $ w $ be an involution in the symmetric group $S_{n} $. In \cite{MR2184447} Hohlweg proved that the Schubert variety $X_w$ is smooth if and only if $ w $ is the longest element of some parabolic subgroup of $ S_{n} $. He arrived at this result by exploiting Lakshmibai and Sandhya's \cite{Vs} classical pattern avoidance criterion for smoothness of type $A$ Schubert varieties.
 
 The main result of this paper extends Hohlweg's result to arbitrary finite, 
 simply laced types. Namely, if $ W $ is a simply laced Weyl group, 
 and $ w\in W $ is an involution, $ X_{w} $ is smooth if and only if $ w $ 
 is the longest element of a parabolic subgroup of $ W $.

It seems likely that it would be possible to arrive at this result in a case by case fashion using the general root system pattern avoidance criteria for smoothness pioneered by Billey and Postnikov \cite{BP}. Instead of investigating this approach, we provide a uniform proof based on Carrell-Peterson type criteria in terms of the associated Bruhat graphs.

In Section \ref{sec:prel}, we recall properties of Coxeter systems and 
Bruhat graphs which can be used to study smoothness of Schubert varieties in a combinatorial way. In Section \ref{sec:main}, we prove Theorem \ref{th:main} which is the main result.

\section{Preliminaries}\label{sec:prel}
In this section some properties of Bruhat graphs of Coxeter groups 
are recalled.  For more on these concepts, see e.g.\ \cite{MR2133266}  and 
\cite{MR1104786}.

A {\em Coxeter group} is a group $W$ generated by a set $ S $ of {\em simple reflections} $ s $ under relations of the 
form $ s^{2}=e $ and $ (ss')^{m(s,s')}=e $ for all $ s,s'\in S $ where $ e $ 
is the identity element and  $m(s',s)= m(s,s')$  $ \geq 2$ is the order of $ ss' $ 
for $ s\neq s' $. 
The pair $ (W,S) $ is called a {\em Coxeter system}, and each element $ w\in W $ is a 
product of generators $s_{i}\in S $, i.e., $ w=s_{1}s_{2}\cdots s_{j} $. 
If $ j $ is minimal among all such 
expressions 
for $ w $, then $ j $ is called the {\em length} of $ w $, denoted $ \ell(w)=j $.  
The Coxeter system $ (W,S) $ is {\em simply laced} if $ m(s, s')\leq 3 $ 
for all  $ s, s' \in S $; otherwise it is {\em multiply laced}.

If $W$ is finite, there exists a {\em longest element} $w_0\in W$. It is an involution and satisfies $\ell(v) < \ell(w_0)$ for all other elements $v\in W$. In fact $w_0$ is the unique element in $w$ such that $\ell(sw_0) < \ell(w_0)$ for all $s\in S$.

From now on let us fix a Coxeter system $(W,S)$. Let  
$ T=\lbrace wsw^{-1}:w\in W, s\in S\rbrace $ 
be the set of {\em reflections} in $ W $. 
For $ v,w \in W $  define: 
\begin{itemize}
\item [ (i) ]  $ v \rightarrow w $ if $ w=vt $ for some $ t\in T $ 
with $ \ell(v)<\ell(w) $. 
\item[ (ii) ] $ v\leq w $ if $ v=v_{0}\rightarrow v_{1}\rightarrow 
\cdots \rightarrow v_{m}=w $ for some $ v_{i}\in W $.
\end{itemize}
The {\em Bruhat graph} $ \Bg{W}{S} $ of $ (W,S) $ is the directed graph whose vertex set
is $ W $ and whose edge set is $ E_{_{\Bg{W}{S}}}=\lbrace (u,w):u\rightarrow w \rbrace $. The {\em Bruhat order} is the partial order relation on $ W $ given by (ii). 
\begin{Example}
	Denote by $ W(A_{2}) $ the Coxeter group of type $ A_{2} $ with set of simple reflections $S(A_2) = \{s_1,s_2\}$ satisfying $m(s_1,s_2)=3$. Then $\Bg{W(A_{2})}{S(A_2)} $ is as depicted in Figure \ref{fi:A2}.
	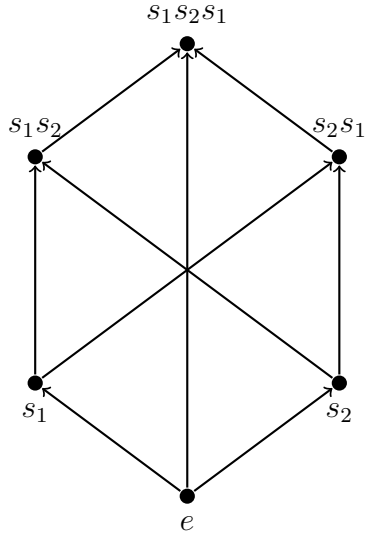
\begin{figure}[H]
		\centering 
		\begin{tikzpicture}[
		thick,
		acteur/.style={
			circle,
			fill=black,
			thick,
			inner sep=0.5pt,
			minimum size=0.2cm
		}
		] 
		\node (a1) at (2,4.5) [acteur,label=$s_{2}s_{1} $]{};
		\node (a2) at (2,1.5)[acteur,label=below:$s_{2} $]{}; 
		\node (a4) at (0,0) [acteur,label=below:$ e $]{}; 
		\node (a5) at (-2,1.5) [acteur,label=below:$s_{1} $]{}; 
		\node (a6) at (-2,4.5) [acteur,label=$s_{1}s_{2} $]{}; 
		\node (a9) at (0,6) [acteur,label=$ s_{1}s_{2}s_{1} $]{}; 
		\draw[<-] (a1) -- (a2);  
		\draw[->] (a5) -- (a1);
		\draw[<-] (a2) -- (a4);
		\draw[<-] (a5) -- (a4);
		\draw[->] (a6) -- (a9);
		\draw[->] (a1) -- (a9);
		\draw[->] (a5) -- (a6);
		\draw[<-] (a9) -- (a4);
		\draw[<-] (a6) -- (a2);
		\end{tikzpicture}
		\caption{The Bruhat graph of $ (W(A_{2}),S(A_2))$.} 
		\label{fi:A2} 
	\end{figure}
\end{Example}
The map $ v\mapsto v^{-1} $ is an automorphism of the Bruhat order: 
\begin{Lemma}\label{le:invert}
	For all $ v,w\in W $, $ v< w $ if and only 
	if $ v^{-1}< w^{-1} $. 
\end{Lemma}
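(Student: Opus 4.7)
The plan is to reduce everything to the covering-like relation $v \to w$ and observe that it is preserved by the inversion map $v \mapsto v^{-1}$. Concretely, first I would show that if $v \to w$ then $v^{-1} \to w^{-1}$. By definition there exists $t \in T$ with $w = vt$ and $\ell(v) < \ell(w)$. Inverting gives $w^{-1} = t^{-1} v^{-1} = t v^{-1}$, using that reflections are involutions. To rewrite this as a right multiplication of $v^{-1}$ by some reflection, I set $t' := v t v^{-1}$; this lies in $T$ because $T$ is by definition closed under conjugation by arbitrary elements of $W$. A direct check gives $v^{-1} t' = v^{-1} v t v^{-1} = t v^{-1} = w^{-1}$. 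Since inversion preserves length, $\ell(v^{-1}) = \ell(v) < \ell(w) = \ell(w^{-1})$, hence $v^{-1} \to w^{-1}$.

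Next I would chain this step. If $v \leq w$ is witnessed by a sequence $v = v_0 \to v_1 \to \cdots \to v_m = w$, then applying the previous observation termwise produces $v^{-1} = v_0^{-1} \to v_1^{-1} \to \cdots \to v_m^{-1} = w^{-1}$, hence $v^{-1} \leq w^{-1}$. To deduce strictness, note that $v \neq w$ iff $v^{-1} \neq w^{-1}$. The reverse implication follows by replacing the pair $(v,w)$ with $(v^{-1}, w^{-1})$ in what has been proved, since $(v^{-1})^{-1} = v$.

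There is no genuine obstacle here: the entire argument rests on the fact that $T$ is stable under conjugation, which is immediate from its definition as $\{w s w^{-1} : w \in W,\, s \in S\}$, together with the elementary length identity $\ell(v^{-1}) = \ell(v)$. The only thing to be mildly careful about is bookkeeping between $\leq$ and $<$ in the final step.
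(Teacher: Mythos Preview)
Your argument is correct. The paper does not actually supply a proof of this lemma; it is stated as a standard fact about Bruhat order (introduced only by the line ``The map $v\mapsto v^{-1}$ is an automorphism of the Bruhat order''). Your reduction to the edge relation $v\to w$, using that $T$ is conjugation-closed and that $\ell(v^{-1})=\ell(v)$, is exactly the usual way this is verified and fills in the omitted details without issue.
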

Define the {\em left descent set} of $ w $ as $ D_{L}(w)=\lbrace s\in S: 
\ell (sw)<\ell(w)\rbrace $. The following fundamental result about the Bruhat order is sometimes called the {\em Lifting property}.
\begin{Lemma}[Verma \cite{verma}] \label{le:lifting}
	Suppose $ v<w $ and $ s\in D_{L}(w)\setminus D_{L}(v) $. 
	Then, $ v\leq sw  $ and $ sv\leq w $.
\end{Lemma}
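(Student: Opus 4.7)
The plan is to invoke the \emph{subword property} of the Bruhat order: $v \leq w$ precisely when every reduced expression for $w$ admits a subword that is a reduced expression for $v$. This is a standard consequence of the strong exchange condition, and it is the natural bridge between the chain definition given in the preliminaries and the kind of prefix manipulation required here.

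First I would use $s \in D_L(w)$ to fix a reduced expression $w = s\,s_{i_2}\cdots s_{i_k}$ starting with $s$; then $s_{i_2}\cdots s_{i_k}$ is a reduced expression for $sw$. Applying the subword property to $v \leq w$ produces a reduced subword for $v$ sitting inside $s\,s_{i_2}\cdots s_{i_k}$. The key observation is that this subword cannot begin with the initial letter $s$: if it did, deleting that $s$ would leave a reduced expression of length $\ell(v)-1$ for $sv$, which would force $s \in D_L(v)$ and contradict the hypothesis. Consequently the reduced subword for $v$ lies entirely in $s_{i_2}\cdots s_{i_k}$, which gives $v \leq sw$.

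For the second inequality $sv \leq w$, I would take the reduced subword for $v$ just obtained and prepend the letter $s$. Because $s \notin D_L(v)$, prepending $s$ to any reduced expression for $v$ still yields a reduced expression, now for $sv$ and of length $\ell(v)+1$. This exhibits a reduced subword of $w = s\,s_{i_2}\cdots s_{i_k}$ equal to $sv$, so the subword property yields $sv \leq w$.

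The main obstacle is really just the fact that the paper defines Bruhat order via chains of reflection multiplications rather than via subwords; reconciling the two viewpoints, i.e.\ deriving the subword property from the chain definition by way of the strong exchange condition, is the substantive step. Once that equivalence is in hand, the argument above is a short bookkeeping exercise and no further ingredients (such as induction on $\ell(w)-\ell(v)$) are needed.
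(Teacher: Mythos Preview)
The paper does not supply a proof of this lemma; it is stated with attribution to Verma and used as a black box. So there is no ``paper's own proof'' against which to compare.

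Your argument via the subword property is correct. The one step worth stating carefully is the claim that the reduced subword for $v$ cannot use the initial letter $s$: if $v = s\,s_{j_2}\cdots s_{j_m}$ were reduced with $m=\ell(v)$, then $sv$ would have an expression of length $m-1$, forcing $\ell(sv)<\ell(v)$ and hence $s\in D_L(v)$, a contradiction. You have this, and the rest follows. Your remark that the subword characterisation must be derived from the chain definition (via the strong exchange condition) is apt, since the paper adopts the latter; this equivalence is standard (see, e.g., Bj\"orner--Brenti), so invoking it is legitimate.
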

\subsection{Reflection subgroups}
Maintain the Coxeter system $ (W,S) $ and its set of reflections $T$ as defined above. Then $ W' $ is a {\em reflection subgroup} of $W$ if $ W'=\langle W'\cap T\rangle $. A reflection subgroup $ W' $ is called {\em dihedral} if $ W'=\langle t,t' \rangle  $ for some $ t,t'\in T $, with $ t\neq t' $.

\begin{Lemma}[Dyer \cite{MR1104786}]\label{le:dihedral}
	Suppose $ t_{1}, t_{2}, t_{3}, t_{4}\in T  $ and $ t_{1}t_{2}=t_{3}t_{4}\neq e $. 
	Then $ W'=\langle t_{1}, t_{2},  t_{3},  t_{4} \rangle $ is a dihedral reflection 
	subgroup of $W$.
\end{Lemma}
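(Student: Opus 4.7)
Both $\langle t_1, t_2\rangle$ and $\langle t_3, t_4\rangle$ are dihedral reflection subgroups, since any group generated by two involutions is dihedral. The plan is to prove the stronger statement that these two subgroups coincide, so that $W' = \langle t_1, t_2\rangle$ is itself dihedral. A useful preliminary observation is the identity
\[
  t_3 r t_3 \;=\; t_3(t_3 t_4) t_3 \;=\; t_4 t_3 \;=\; (t_3 t_4)^{-1} \;=\; r^{-1},
\]
where $r := t_1 t_2 = t_3 t_4$; thus $t_3$, and by the symmetric computation $t_4$, conjugates $r$ to $r^{-1}$.

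To exploit this, I would pass to the canonical geometric (Tits) representation of $(W,S)$ on a real vector space $V$, assigning to each $t\in T$ a root $\alpha_t\in V$ such that $t$ acts as the reflection through $\alpha_t^{\perp}$. Since $r\neq e$ forces $t_1\neq t_2$, the roots $\alpha_{t_1},\alpha_{t_2}$ span a plane $P$, and on the orthogonal complement $Q := P^{\perp}$ the element $r$ acts trivially (being the product of two reflections whose hyperplanes contain $Q$). Because $t_3$ conjugates $r$ to $r^{-1}$, it normalises both $Q$ and $P$, and a direct inspection of the reflection formula shows that a reflection can preserve the decomposition $V = Q\oplus P$ only when its root lies wholly in one of the two summands. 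The possibility $\alpha_{t_3}\in Q$ can be ruled out by an eigenvalue count: it would make $t_3$ the identity on $P$, whence $t_4 = t_3 r$ would inherit the two nontrivial eigenvalues of $r$ on $P$ together with an extra $-1$ eigenvalue from $t_3|_Q$, which is incompatible with $t_4$ being a reflection. Hence $\alpha_{t_3}\in P$, and symmetrically $\alpha_{t_4}\in P$.

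Having localised all four roots in the plane $P$, I would invoke the reflection-subgroup machinery of Dyer's paper to conclude that $T_P := \{t\in T : \alpha_t\in P\}$ generates a reflection subgroup $W_P\le W$. Every element of $W_P$ fixes $Q$ pointwise and acts by an isometry of the two-dimensional space $P$, so $W_P$ embeds faithfully in $O(P)$; being generated by reflections, it is (finite or infinite) dihedral. Then $W'\le W_P$ is a subgroup of a dihedral group containing a reflection, hence itself dihedral, completing the proof.

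The main obstacle I expect is the second paragraph: the eigenvalue/normaliser analysis that pins the four roots in $P$. In particular, when $m := \operatorname{ord}(r) = 2$ the relation $t_3 r t_3 = r^{-1}$ becomes the weaker statement that $t_3$ centralises $r$, so the claim that $\alpha_{t_3}$ must lie in $P$ or $Q$ has to be argued more carefully; the eigenvalue count above, however, still excludes $\alpha_{t_3}\in Q$ and makes the strategy go through uniformly.
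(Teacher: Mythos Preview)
The paper does not prove this lemma; it is simply quoted from Dyer, so there is no in-paper argument to compare your sketch against.

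Regarding the sketch itself: the geometric-representation strategy is sound when $W$ is finite (so the Tits form is positive definite), which is the only case the paper actually needs. There the fixed space of $r$ on $V$ is exactly $Q$, one has $Q^\perp=P$, and your eigenvalue count correctly rules out $\alpha_{t_3}\in Q$. The case you single out as the main obstacle, $\operatorname{ord}(r)=2$, is in fact harmless: then $t_3$ centralises $r$ and therefore preserves its $\pm1$-eigenspaces, which are precisely $P$ and $Q$, so the rest of the argument runs unchanged.

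The genuine gap is one you do not mention. The lemma is stated (and used by Dyer) for arbitrary Coxeter systems, and for infinite $W$ the Tits form need not be positive definite. When $|(\alpha_{t_1},\alpha_{t_2})|\ge 1$, i.e.\ when $\langle t_1,t_2\rangle$ is infinite dihedral, the form restricted to $P$ is degenerate: then $V\ne P\oplus P^\perp$, and $r|_P$ is a nontrivial unipotent with a one-dimensional fixed line inside $P$, so both ``the fixed space of $r$ equals $Q$'' and ``$Q^\perp=P$'' fail, and the step pinning $\alpha_{t_3}$ into $P$ or $Q$ breaks down. To salvage the linear-algebra route in general you would have to replace the orthogonal splitting by a finer analysis of the generalised eigenspaces (Jordan structure) of $r$, or else argue through Dyer's reflection-subgroup machinery directly.
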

It turns out that reflection subgroups of  $ W $ are themselves Coxeter groups.
For $w\in W$, define $ N(w):=\lbrace t\in T: \ell(tw)<\ell(w)\rbrace $. This is the set of {\em inversions} of $w$.
\begin{Theorem}[Deodhar \cite{deodhar}, Dyer \cite{MR1076077}]\label{th:refsubgroup}
	Let $ W' $ be a reflection subgroup of $W$ and define
	$ X=\lbrace t\in T: N(t)\cap W'=\lbrace t \rbrace \rbrace $. Then,
	\begin{itemize}
		\item[(1)] $ W'\cap T= \lbrace ut'u^{-1}:u\in W', t'\in X \rbrace $.
		\item[(2)] $ (W',X) $ is a Coxeter system.
	\end{itemize}
\end{Theorem}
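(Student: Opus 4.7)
The plan is to realize $X$ as the set of ``simple reflections'' of $W'$, in analogy with how $S$ consists of the simple reflections of $W$, and to derive both claims from a length-theoretic analysis combined with Lemma \ref{le:dihedral} (the dihedral lemma).

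For part (1), I would fix $t \in W' \cap T$ and show by induction on $\ell(t)$ that $t$ is $W'$-conjugate to some element of $X$. If $N(t) \cap W' = \{t\}$, then by definition $t \in X$ and there is nothing to prove. Otherwise, pick $r \in (N(t) \cap W') \setminus \{t\}$, so $\ell(rt) < \ell(t)$. The product $rt$ satisfies $rt \neq e$, and since $r,t \in W'$, the subgroup $\langle r, t \rangle$ lies in $W'$. Applying Lemma \ref{le:dihedral} (with, say, $t_1 = r$, $t_2 = t$, $t_3 = t_4 = $ suitable reflections from $\langle r,t\rangle$) shows that $\langle r, t\rangle$ is a dihedral reflection subgroup, hence all its reflections are in $W' \cap T$. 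A direct analysis of the dihedral group $\langle r,t \rangle$ produces a reflection $t' = rtr \in W' \cap T$ of strictly smaller length, which by induction is $W'$-conjugate to some element of $X$; conjugating further by $r \in W'$ then expresses $t$ the same way. This, together with the hypothesis $W' = \langle W' \cap T \rangle$, also yields $W' = \langle X \rangle$.

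For part (2), I would define an auxiliary length $\ell'(w)$ on $W'$ as the minimal number of factors in any expression $w = x_1\cdots x_k$ with $x_i \in X$, and prove the key equality
\[
  \ell'(w) = |N(w) \cap W'| \qquad \text{for all } w \in W'.
\]
From this one can derive the strong exchange condition for $(W', X)$: given a reduced $X$-expression $w = x_1\cdots x_k$ and $t \in W' \cap T$ with $\ell'(tw) < \ell'(w)$, one pulls back the corresponding cancellation from $(W,S)$ and uses Lemma \ref{le:dihedral} whenever the argument needs to pass between two interacting reflections of $W'$ not belonging to $X$. Since the strong exchange condition characterizes Coxeter systems, this proves that $(W', X)$ is a Coxeter system.

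The main obstacle is establishing $\ell'(w) = |N(w) \cap W'|$, which amounts to showing that right-multiplication by an element $x \in X$ toggles exactly one element of the restricted inversion set $N(w) \cap W'$, with no ``hidden'' cancellations among $X$-generators. This is where Lemma \ref{le:dihedral} does the decisive work: any interaction between two reflections in $W' \cap T$ takes place inside a dihedral reflection subgroup of $W$, reducing every delicate step to a transparent rank-two calculation that one can check by hand.
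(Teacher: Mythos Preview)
The paper does not prove this theorem at all: it is stated as a known result, attributed to Deodhar and Dyer, and used as a black box in the preliminaries. There is therefore nothing in the paper to compare your proposal against.

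As for the proposal itself, the overall strategy is indeed in the spirit of Dyer's original argument (working with the restricted inversion sets $N(w)\cap W'$ and reducing interactions between reflections to dihedral subgroups). Two points deserve more care if you intend this as an actual proof. First, in part (1) you assert that from $r\in N(t)\cap W'$ with $r\neq t$ one gets $\ell(rtr)<\ell(t)$; this is true but is not an immediate consequence of $\ell(rt)<\ell(t)$ and requires a separate argument (e.g.\ via roots, or via Dyer's analysis of reflection orderings in dihedral subgroups). Second, your invocation of Lemma~\ref{le:dihedral} in part (1) is cosmetic: $\langle r,t\rangle$ is dihedral simply because it is generated by two involutions, and the lemma is not needed there. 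Where the dihedral lemma genuinely earns its keep is in part (2), but ``pull back the cancellation from $(W,S)$'' is too vague as stated; Dyer's route is rather to show directly that $w\mapsto N(w)\cap W'$ satisfies the cocycle condition and that each $x\in X$ contributes exactly one element, from which the exchange condition for $(W',X)$ follows.
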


Coxeter described all types of affine groups generated by 
reflections and their reflection subgroups \cite{MR1503182}. 
The following lemma is a very special 
case. It can be seen directly e.g.\ by considering root lengths.

\begin{Lemma}\label{le:simplylaced}
	Every reflection subgroup of a finite simply laced group is itself simply laced.
\end{Lemma}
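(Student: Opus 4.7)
The plan is to realize $W$ as a Euclidean reflection group. Since $W$ is a finite simply laced Coxeter group, its irreducible factors are of type $A$, $D$, or $E$; in particular, $W$ admits a crystallographic root system $\Phi$ in which every root has the same length. Each reflection $t \in T$ is then the orthogonal reflection in the hyperplane fixed by a pair of opposite roots $\pm\alpha_t \in \Phi$.

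Let $W' \leq W$ be a reflection subgroup and let $X \subseteq T$ be the canonical Coxeter generating set produced by Theorem \ref{th:refsubgroup}, so that $(W',X)$ is a Coxeter system. What I want to show is that $m(t,t') \leq 3$ for all distinct $t, t' \in X$. Since $W' \leq W$, the integer $m(t,t')$ equals the order of $tt'$ in $W$; this in turn equals the order of the restriction of $tt'$ to the plane spanned by $\alpha_t$ and $\alpha_{t'}$, on which $tt'$ acts as a rotation by twice the angle $\theta$ between the two roots.

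The task thus reduces to a standard angle computation. Because $\Phi$ is crystallographic with all roots of equal length, the quantity $2\langle \alpha_t,\alpha_{t'}\rangle / \langle \alpha_{t'},\alpha_{t'}\rangle = 2\cos\theta$ lies in $\mathbb{Z}$; since $\alpha_{t'} \neq \pm\alpha_t$, one has $|2\cos\theta| < 2$, so $2\cos\theta \in \{-1,0,1\}$ and hence $\theta \in \{\pi/3,\pi/2,2\pi/3\}$. In each case the rotation by $2\theta$ has order at most $3$, establishing $m(t,t') \leq 3$.

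I foresee no serious obstacle. The only subtle point is the appeal to the classification in the first paragraph, needed to know that a simply laced finite Coxeter group is automatically crystallographic and has all roots of the same length; once that is granted, the remainder is precisely the elementary root-length calculation alluded to by the authors.
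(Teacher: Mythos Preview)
Your argument is correct and is precisely the root-length argument the paper alludes to (the paper gives no detailed proof, only the remark that the lemma ``can be seen directly e.g.\ by considering root lengths''). In fact you prove slightly more than needed: your computation shows that \emph{any} two distinct reflections $t,t'\in T$ generate a dihedral group of order at most $6$, so every dihedral reflection subgroup of $W$ is simply laced, from which the general case follows immediately via Theorem~\ref{th:refsubgroup}.
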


For any subset $ Y \subseteq W$ define 
the Bruhat graph of $ Y $,  denoted $ \Bg{Y}{S} $, as the directed subgraph of 
$ \Bg{W}{S}$ induced by $ Y $. 

\begin{Theorem}[Dyer \cite{MR1104786}]\label{th:bgisom}
Let $ W' $ be a reflection subgroup of $W$ and let $ X $ be as in Theorem \ref{th:refsubgroup}. Then $\Bg{W'}{S} = \Bg{W'}{X}$. 
\end{Theorem}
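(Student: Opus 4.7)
The vertex sets of $\Bg{W'}{S}$ and $\Bg{W'}{X}$ both equal $W'$, so only the edges require comparison. An edge $u\to w$ of the induced subgraph $\Bg{W'}{S}$ comes from a relation $w=ut$ with $t\in T$; since $u,w\in W'$, the reflection $t=u^{-1}w$ automatically lies in $W'\cap T$, which by Theorem \ref{th:refsubgroup}(1) is precisely the reflection set of the Coxeter system $(W',X)$. Thus the edges of both graphs are indexed by the same pairs $(u,t)\in W'\times(W'\cap T)$, and the theorem reduces to showing that for every such pair,
\[
\ell_{S}(u)<\ell_{S}(ut)\iff\ell_{X}(u)<\ell_{X}(ut).
\]

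In any Coxeter system, an edge $u\to w$ of the Bruhat graph is equivalent to $u\le w$ together with $u^{-1}w$ being a reflection. Hence the displayed equivalence is the same as the restriction property that, for $u,w\in W'$, one has $u\le_{S}w$ iff $u\le_{X}w$. I would prove this restriction by induction on $\ell_{X}(w)$, the base case $w=e$ being trivial. For the step, pick a generator $x\in X$ with $\ell_{X}(xw)<\ell_{X}(w)$ and apply Lemma \ref{le:lifting} inside $(W,S)$ to the pair $u,w$ with left descent candidate $x$, after which $xw$ has strictly smaller $X$-length and the induction hypothesis applies.

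For this strategy to work I must first match descents of $x$ across the two systems, i.e.\ show that for $x\in X$ and $w\in W'$,
\[
\ell_{S}(xw)<\ell_{S}(w)\iff\ell_{X}(xw)<\ell_{X}(w),
\]
equivalently, $N_{S}(w)\cap W'=N_{X}(w)$ where $N_{\bullet}$ denotes the (left) inversion set in the appropriate system. The inclusion $N_{X}(w)\subseteq N_{S}(w)\cap W'$ is the easier direction: any $X$-reduced expression for $w$ can be spelled out as an $S$-word, and reflections appearing as $X$-inversions remain $S$-inversions. The reverse inclusion is where the defining property $N_{S}(x)\cap W'=\{x\}$ of $x\in X$ must be used, to rule out the existence of an $S$-inversion in $W'$ that is not detected inside the subsystem $(W',X)$.

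The main obstacle will be this reverse inclusion, which I expect to handle by induction together with Lemma \ref{le:dihedral}: if a would-be $S$-inversion $t\in W'\cap T$ failed to be an $X$-inversion, the dihedral subgroup generated by $t$ and a suitable $x\in X$ would yield a pair of equal products of reflections, forcing a configuration that contradicts either the defining condition $N_{S}(x)\cap W'=\{x\}$ or the $X$-reducedness of the chosen expression for $w$. Once this descent-alignment lemma is in hand, the induction outlined in the second paragraph goes through and completes the proof that $\Bg{W'}{S}=\Bg{W'}{X}$.
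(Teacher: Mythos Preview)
The paper does not prove Theorem~\ref{th:bgisom}; it is quoted from Dyer~\cite{MR1104786} without argument, so there is no in-paper proof to compare against. Your overall reduction---matching edges comes down to the inversion-set identity $N_S(w)\cap W'=N_X(w)$ for $w\in W'$---is exactly the right target and is indeed the core of Dyer's argument.

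However, your sketch of that identity has a genuine gap. The ``easier'' inclusion $N_X(w)\subseteq N_S(w)\cap W'$ is not established by spelling out an $X$-reduced word as an $S$-word: deleting the block of $S$-letters corresponding to some $x_i$ from a (typically non-reduced) $S$-expression for $w=x_1\cdots x_k$ gives an $S$-expression for $t_iw$, but there is no reason this forces $\ell_S(t_iw)<\ell_S(w)$. The proposed use of Lemma~\ref{le:dihedral} for the reverse inclusion is too vague to assess. Dyer's actual proof handles both inclusions simultaneously via the cocycle identity $N_S(uv)=N_S(u)\,\triangle\,uN_S(v)u^{-1}$ (valid in any Coxeter system): taking $u=x\in X$ with $\ell_X(xw)<\ell_X(w)$, intersecting with $W'=xW'x^{-1}$, and using the defining property $N_S(x)\cap W'=\{x\}$ yields $N_S(w)\cap W'=\{x\}\,\triangle\,x\bigl(N_S(xw)\cap W'\bigr)x^{-1}$; the identical cocycle relation in $(W',X)$ gives the same recursion for $N_X(w)$, and induction on $\ell_X(w)$ finishes. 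With this in hand, your lifting-based induction for the Bruhat-order restriction goes through.
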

\subsection{Schubert varieties}
Let $ G $ be an algebraic group over $ \mathbb{C} $ and $ B $ a Borel subgroup containing a maximal torus $T$. Then $ G /B $ is called the {\em flag variety} and it is the disjoint union of 
{\em Schubert cells} $ BwB/B $ where $ w\in W $ and $ W = N(T)/T $ is the {\em Weyl group} (which is a finite Coxeter group). 
The closure 
$ X_{w}:=\overline{BwB/B} $ is called a {\em Schubert variety}. 
Note that $ G/B=X_{w_{0}} $
 for $ w_{0} $ the longest element 
of $ W $. More on Schubert varieties can be found e.g.\ in \cite{MR1782635}.

Next, we review ways to detect singularities of Schubert varieties by inspecting Bruhat graphs. For a lower interval $ [e,w]=\{z\in W:e\leq z \leq w\} $ write $ \Bg{w}{S} $ for the Bruhat graph of $ [e,w] $. Let $ z $ be a vertex in $\Bg{w}{S}$. The \emph {degree} of $z$, denoted $ \deg_w(z) $, is the number of edges incident to $ z $ in $\Bg{w}{S}$ (where directions of edges are ignored).

The following result holds in any Coxeter group. In that generality it is due to Dyer \cite{MR1202136}. In our context, where $W$ is a (finite) Weyl group, other proofs given by Carrell and Peterson \cite{MR1278700} and Polo \cite{MR1307965} also apply.

\begin{Theorem}\label{th:degree}
Let $ w\in W $. Then the degree of any vertex in $ \Bg{w}{S} $ is at least $\ell(w)$.
\end{Theorem}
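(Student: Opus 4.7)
Observe first that the degree of any $z \in [e, w]$ in $\Bg{w}{S}$ decomposes as $\ell(z) + N_{\uparrow}(z, w)$, where
\[
N_{\uparrow}(z, w) := |\{t \in T : z < zt \le w\}|.
\]
This is because there are exactly $\ell(z)$ reflections $t$ with $zt < z$, namely the right inversions of $z$, and each such $zt$ automatically lies in $[e, w]$ since $zt \le z \le w$. So the theorem reduces to the combinatorial inequality $N_{\uparrow}(z, w) \ge \ell(w) - \ell(z)$ for all $z \le w$.

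The plan is to prove this inequality by induction on $n := \ell(w) - \ell(z)$, the case $n = 0$ (so $z = w$) being vacuous. For the inductive step, pick any $s \in D_L(w)$ and set $w' := sw$. By Lemma \ref{le:lifting} and its symmetric variant, every $z \le w$ satisfies either $z \le w'$ (when $s \notin D_L(z)$) or $sz \le w'$ (when $s \in D_L(z)$). In the first case, induction on the pair $(z, w')$ furnishes $n - 1$ distinct reflections $t$ with $z < zt \le w' < w$, and the conjugate $t_0 := z^{-1} s z \in T$, satisfying $z < z t_0 = sz \le w$ by lifting, should supply the missing edge. In the second case, induction on $(sz, w')$ yields $n$ reflections $t$ with $sz < sz \cdot t \le w'$; each should be translated into a reflection at $z$ via left multiplication by $s$ and a further invocation of lifting.

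The principal obstacle, in both cases, is distinctness: one must ensure that the reflections produced by the induction, combined with any extras, are pairwise distinct witnesses of up-edges from $z$ inside $[e, w]$. The translation between reflections at $z$, $sz$, and $z$ itself is mediated by conjugation, and some care is needed when $sz$ already lies in $[e, w']$, since then $t_0$ may coincide with an inductively produced reflection, and one must invoke additional structure—an auxiliary application of lifting to the pair $(sz, w')$—to recover the missing edge. A tidier and more uniform alternative, originating with Polo, fixes a reduced expression $w = s_{i_1} \cdots s_{i_{\ell(w)}}$ and exploits the subword property: any reduced subword for $z$ leaves $\ell(w) - \ell(z)$ unused positions, each of which can be shown, via an analysis of the associated reflections along the reduced word, to contribute a distinct $t \in T$ with $z < zt \le w$, completing the count in one stroke.
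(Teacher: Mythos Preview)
The paper does not supply its own proof of this theorem; it is quoted from the literature, with attributions to Dyer (arbitrary Coxeter groups) and to Carrell--Peterson and Polo (Weyl groups). Your reduction to the inequality $N_{\uparrow}(z,w)\ge\ell(w)-\ell(z)$ is correct---this is precisely Deodhar's inequality---and you rightly identify distinctness as the obstruction in the inductive scheme. But you do not actually overcome it. In Case~1 with $sz\le w'=sw$, the reflection $t_0=z^{-1}sz$ already lies among the $n-1$ reflections furnished by induction on $(z,w')$, and ``an auxiliary application of lifting to the pair $(sz,w')$'' yields up-edges from $sz$, not from $z$; translating them back by left-multiplication by $s$ can turn up-edges into down-edges (for instance when $sz\lessdot szt$ and $s\notin D_L(zt)$, one gets $\ell(zt)=\ell(z)-1$), so the count is not preserved. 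What is genuinely required is an up-edge from $z$ landing in $[e,w]\setminus[e,w']$, and no elementary Bruhat-order manipulation is known to produce one in general.

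The subword alternative also fails as stated, and it is not Polo's argument (Polo's proof is geometric). For a concrete obstruction, take $w=s_1s_2s_1$ in type $A_2$ and $z=s_1$ realised as the subword at position~$1$; inserting the unused position~$3$ gives the word $s_1s_1=e$, which lies \emph{below} $z$, so unused positions do not reliably yield up-edges from $z$, let alone pairwise distinct ones. All known proofs of Deodhar's inequality invoke substantial extra structure---Dyer's goes through Hecke-algebra positivity, while Carrell--Peterson and Polo argue via the geometry of Schubert varieties---and your sketch, though it points in a natural direction, does not close the gap.
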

In any Bruhat graph $ \Bg{w}{S} $, it is known that $\ell(w)=|N(w)| = \deg_w(w) $.
In particular, if $ \Bg{w}{S} $ is regular (i.e., every vertex of $ \Bg{w}{S} $ has the same number of edges), then $ \deg_w(e)=\ell(w)$.

\begin{Theorem}[Carrell-Peterson \cite{MR1278700}]\label{th:CP}
	The Schubert variety $X_{w}  $ is rationally smooth 
	if and only if $ \Bg{w}{S} $ is regular.
\end{Theorem}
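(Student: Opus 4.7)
The plan is to reduce rational smoothness of $X_w$ to a local condition at each $T$-fixed point of $X_w$, namely the points $vB/B$ with $v \le w$, and to translate that local condition into the edge count $\deg_w(v)$. By standard $T$-equivariant arguments, rational smoothness of $X_w$ is equivalent to rational smoothness at every such $v$, so it suffices to treat a fixed $v$.

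First I would produce a local Poincar\'e polynomial at $v$. A generic one-parameter subgroup of $T$ gives a Bia{\l}ynicki-Birula paving of $X_w$ whose cells meeting $v$ are indexed by $u \in [v,w]$ with complex dimensions $\ell(u) - \ell(v)$. The resulting local Poincar\'e polynomial is $P_{v,w}(q) = \sum_{u \in [v,w]} q^{\ell(u)-\ell(v)}$. A standard fact---equivalent to the corresponding Kazhdan-Lusztig polynomial being trivial---is that $X_w$ is rationally smooth at $v$ if and only if $P_{v,w}(q)$ is palindromic, i.e.\ $q^{\ell(w)-\ell(v)} P_{v,w}(q^{-1}) = P_{v,w}(q)$.

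Next I would connect this to $\Bg{w}{S}$. The tangent $T$-weights at $v \in X_w$ are in bijection with reflections $t \in T$ such that $vt \le w$ and $vt \ne v$, and these are exactly the reflections labelling edges of $\Bg{w}{S}$ incident to $v$; hence their number equals $\deg_w(v)$. Rational smoothness at $v$ then forces $\deg_w(v) = \dim X_w = \ell(w)$, and combined with the lower bound of Theorem \ref{th:degree} this yields regularity of $\Bg{w}{S}$.

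The hard part will be the converse: deducing palindromicity of every $P_{v,w}(q)$ from the uniform equalities $\deg_w(v) = \ell(w)$ throughout $[e,w]$. This is the essence of the Carrell-Peterson argument. I would attack it by downward induction on $\ell(v)$, using the $\mathbb{C}^*$-action coming from the generic cocharacter to express $P_{v,w}$ in terms of the polynomials $P_{u,w}$ for $u$ covering $v$ in Bruhat order, and invoke the degree equality at $v$ to balance the palindromic normalizations contributed by the edges leaving versus those entering $v$. This combinatorial balancing is the technical crux.
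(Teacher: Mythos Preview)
The paper does not prove Theorem~\ref{th:CP}; it is quoted as a background result from Carrell and Peterson \cite{MR1278700} (with related arguments in Dyer \cite{MR1202136} and Polo \cite{MR1307965}) and no argument is given. There is therefore nothing in the paper to compare your sketch against.

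A brief remark on the sketch itself: your forward direction conflates two different objects. The reflections $t$ with $vt\le w$, $vt\ne v$ index the $T$-stable \emph{curves} in $X_w$ through the fixed point $vB/B$, and this count is indeed $\deg_w(v)$; but in general these are \emph{not} the weights of the Zariski tangent space $T_v X_w$ (which can be strictly larger at a singular point, and whose weight description in terms of Bruhat order is subtle outside type $A$). The Carrell--Peterson argument runs through the $T$-curve count, not the tangent space, so you should phrase it that way. Your outline of the converse is honest about where the work lies but is not yet a proof; the actual argument in \cite{MR1278700} proceeds via the Kazhdan--Lusztig framework (showing that the degree condition forces all $P_{v,w}=1$), rather than the inductive balancing you propose.
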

Smoothness and rational smoothness are equivalent for simply laced Weyl groups:
\begin{Theorem}[Carrell-Kuttler \cite{MR1953262}]\label{th:simplylaced}
	Suppose $ W $ is simply laced. Then for any $ w \in W$,  
	$ X_{w} $ is smooth if and only if it is rationally smooth. 
\end{Theorem}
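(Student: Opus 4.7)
The direction ``smooth $\Rightarrow$ rationally smooth'' is a general fact about complex algebraic varieties and requires no hypothesis on the type: any smooth point is automatically rationally smooth. So the content of the theorem lies in the converse implication, which I would attempt via a tangent space analysis at $T$-fixed points.

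The $T$-fixed points of $X_{w}$ are precisely the elements $v\leq w$, and since $X_{w}$ is $B$-invariant, smoothness of $X_{w}$ is equivalent to $\dim T_{v}X_{w}=\ell(w)$ at every such $v$. As a $T$-module, $T_{v}X_{w}$ decomposes into weight spaces, and the $1$-dimensional $T$-orbit closures through $v$ inside $X_{w}$ are projective lines joining $v$ to other fixed points $v'\leq w$. These ``$T$-curves'' are in bijection with the edges incident to $v$ in $\Bg{w}{S}$, so there are exactly $\deg_{w}(v)$ of them. The crucial identity I would want to establish, and in which the simply-laced hypothesis enters, is
\[
\dim T_{v}X_{w} \;=\; \deg_{w}(v) \qquad \text{for every } v\leq w.
\]
In non-simply-laced types this identity can fail: distinct $T$-curves can contribute tangent vectors with coinciding weights, inflating $\dim T_{v}X_{w}$ beyond $\deg_{w}(v)$.

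Once the identity is in hand, the theorem is immediate. If $X_{w}$ is rationally smooth then Theorem \ref{th:CP} tells us that $\Bg{w}{S}$ is regular, and the observation after Theorem \ref{th:degree} identifies its common degree as $\deg_{w}(w)=\ell(w)$. Hence $\dim T_{v}X_{w}=\ell(w)$ at every $T$-fixed point, and $X_{w}$ is smooth.

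The main obstacle is therefore the identity $\dim T_{v}X_{w}=\deg_{w}(v)$ in the simply-laced setting. The $T$-weights occurring in $T_{v}X_{w}$ are, up to translation by $v$, certain positive roots; one must show both that these weights are pairwise distinct and that each corresponding weight space is one-dimensional and spanned by the tangent vector to a $T$-curve. Both steps depend on the absence of root-length variation in $ADE$ root systems: simply-lacedness forbids the sort of ``doubling'' phenomena that in types $B$, $C$, $F$, $G$ allow two different reflections to produce the same tangent weight at $v$. Lemma \ref{le:simplylaced}, which ensures that any reflection subgroup arising in the local analysis at $v$ is again simply laced, should reduce the verification to the rank-two case, where it can be checked directly by inspecting the Bruhat graphs of the dihedral groups $I_{2}(3)$ (i.e., $A_{1}\times A_{1}$ and $A_{2}$).
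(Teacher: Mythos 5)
This statement is not proved in the paper at all: it is imported as a black box from Carrell--Kuttler \cite{MR1953262}, so there is no in-paper argument to compare yours against. Judged on its own terms, your proposal correctly dispatches the easy direction and correctly reduces the hard direction to a tangent-space statement, but it then rests entirely on the ``crucial identity'' $\dim T_{v}X_{w}=\deg_{w}(v)$, which you assert rather than prove. That identity \emph{is} the theorem: establishing it (at rationally smooth points) is the entire content of the Carrell--Kuttler paper, and their argument is a genuinely delicate descending induction from $w$ using the Peterson translate of tangent spaces along suitable $T$-curves, not a local reduction to rank two. Your suggestion that Lemma \ref{le:simplylaced} ``should reduce the verification to the rank-two case'' is not an argument; the Zariski tangent space at a $T$-fixed point is not controlled by rank-two reflection subgroups in any evident way. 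Note also that you claim the identity for \emph{every} $v\leq w$, which is stronger than what is needed and stronger than what Carrell--Kuttler prove (they prove it at rationally smooth points); at singular points of simply laced Schubert varieties the equality is not known to hold in general.

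There is also a conceptual misdiagnosis in your explanation of where simply-lacedness enters. Distinct $T$-curves through a fixed point $v$ of $G/B$ always have distinct tangent weights, since $T_{v}(G/B)$ is multiplicity-free as a $T$-module; so ``distinct $T$-curves contributing coinciding weights'' is not the failure mode in types $B$, $C$, $F$, $G$. What actually happens in non-simply-laced types is that $T_{v}X_{w}$ can contain weight lines of $T_{v}(G/B)$ whose associated $T$-curves do \emph{not} lie in $X_{w}$, so that $\dim T_{v}X_{w}$ strictly exceeds the number of $T$-curves $\deg_{w}(v)$; ruling this out at rationally smooth points in the $ADE$ case is precisely the hard step. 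As written, then, the proposal restates the theorem in tangent-space language but does not prove it; for the purposes of this paper the correct move is the one the authors make, namely to cite \cite{MR1953262}.
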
 

\begin{Corollary}\label{co:simplylaced}
If $ W $ is simply laced then 
	$ X_{w} $ is smooth if and only if 
	$ \Bg{w}{S} $ is regular.
\end{Corollary}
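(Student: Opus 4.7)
The plan is to chain together the two theorems immediately preceding the corollary. Both are biconditionals whose middle term is rational smoothness of $X_w$, so concatenating them yields precisely the stated equivalence.

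First I would invoke Theorem \ref{th:simplylaced}: since $W$ is assumed simply laced, $X_w$ is smooth if and only if it is rationally smooth. Then I would apply Theorem \ref{th:CP}, which asserts that $X_w$ is rationally smooth if and only if the Bruhat graph $\Bg{w}{S}$ is regular. Transitivity of ``if and only if'' gives the corollary.

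There is no real obstacle here; the corollary merely repackages the two cited theorems into a single combinatorial criterion that sidesteps the intermediate notion of rational smoothness. Its purpose is to set up the strategy for the rest of the paper: to decide whether $X_w$ is smooth, it will suffice to check whether every vertex of $\Bg{w}{S}$ has the same degree as $w$ itself (equivalently, using the remark following Theorem \ref{th:degree}, degree equal to $\ell(w)$). The only thing to be mildly careful about is that both cited theorems apply in the present setting, which is immediate since a finite simply laced Coxeter group is a Weyl group and hypothesis of Theorem \ref{th:simplylaced} is exactly that.
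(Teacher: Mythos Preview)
Your argument is correct and matches the paper's intent: the corollary is stated without proof precisely because it follows immediately by chaining Theorem~\ref{th:CP} and Theorem~\ref{th:simplylaced}. There is nothing to add.
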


In general smoothness is stronger than rational 
smoothness when $ W $ is not simply laced. For example 
$ X_{s_{1}s_{2}s_{1}} $ is rationally smooth but not 
smooth if $ W $ is of type $ C_{2} $ generated by the simple reflections $s_1$ and $s_2$ with $ s_1 $ corresponding to the short root.

The next definition provides another characterization which can be used to 
prove that a given Schubert variety is not rationally smooth (see Theorem \ref{th:brokenrhombi} below).
\begin{Definition}\label{de:brokenrhombi}
\emph{\cite{MR2802176}}
	Let $ x,u,v\leq w $. The Bruhat interval $ [e,w] $ contains the {\em broken rhombus} 
	$ (x,u,v) $ if 
	the conditions below are satisfied: 
	\begin{itemize}
		\item[(1)] $ x\leftarrow u\rightarrow v $;
		\item[(2)]There is some $ y\in W $  with $ x\rightarrow y\leftarrow  v $;
		\item[(3)] If $ x\rightarrow y\leftarrow v $, then $ y\nleq w $.
	\end{itemize}
\end{Definition}
\begin{Example} \emph{Consider the group $W(D_4)$ of type $D_4$ with set of simple reflections $S(D_4) = \{s_1,s_2,s_3,s_4\}$ where $ m(s_{i},s_{2})=3 $ for $ i=1, 3, 4 $ and 
	$ m(s_{i}, s_{j})=2 $ for  $ i, j\neq 2 $. In Figure \ref{fi:D4} is depicted $ \Bg{w}{S(D_4)} $ 
	for $ w=s_{2}s_{1}s_{3}s_{4}s_{2} $. We use  $ 1,2,3,4 $ for $ s_{1},s_{2},s_{3},s_{4} $ respectively for brevity. 
	Thus, for example, $w$ is represented by $ 21342 $.
	 The interval $ [e, w] $  
	 contains the broken rhombus $ (s_{2}s_{3} , s_{2}, s_{1}s_{2}) $ 
	since there is no $ y \le w $  
	such that $ s_{2}s_{3}\rightarrow y\leftarrow  s_{1}s_{2} $  although   
	$ s_{2}s_{3}\rightarrow s_{1}s_{2}s_{3}\leftarrow  s_{1}s_{2} $. Moreover note that $ \Bg{w}{S(D_4)} $ is not regular. Hence, $ X_{w} $ is not smooth for $ w=s_{2}s_{1}s_{3}s_{4}s_{2}\in W(D_{4}) $.}
\end{Example}
The following is \cite[Theorem 5.3]{MR2802176}. 
It can also be obtained from the main result of Dyer \cite{dyer2001rank}.
\begin{Theorem}\label{th:brokenrhombi}
The Schubert variety $ X_{w} $ is rationally smooth if and only if $ [e,w] $ contains no 
broken rhombus.	
\end{Theorem}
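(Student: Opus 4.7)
The plan is to invoke Carrell and Peterson's criterion, Theorem \ref{th:CP}, which reduces the statement to the purely combinatorial claim that $\Bg{w}{S}$ is regular if and only if $[e,w]$ contains no broken rhombus. Since $\deg_w(w) = \ell(w)$ and Theorem \ref{th:degree} gives $\deg_w(z) \geq \ell(w)$ for every $z \in [e,w]$, regularity is equivalent to every vertex having degree exactly $\ell(w)$, and any failure of regularity manifests as some vertex of $\Bg{w}{S}$ having strictly more incident edges than $w$ itself.

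For the direction ``no broken rhombus implies regularity'', I would argue by downward induction on $\ell(z)$ that $\deg_w(z) = \ell(w)$ for every $z \in [e,w]$; the base case $z = w$ is automatic. For the inductive step, fix a Bruhat cover $z \lessdot z'$ inside $[e,w]$ and construct a bijection between the edges of $\Bg{w}{S}$ at $z$ and at $z'$. Given a reflection $t$ with $zt \in [e,w] \setminus \{z\}$, I would ``slide'' the edge $z \to zt$ along the cover $z \lessdot z'$ to produce a rhombus-completing vertex $z't' \in W$; existence of this fourth corner follows from the dihedral reflection subgroup generated by $t$ and $z'z^{-1}$ (Lemma \ref{le:dihedral}), reduced to a transparent dihedral Bruhat picture via Theorem \ref{th:bgisom}, with Bruhat inequalities controlled by the lifting property (Lemma \ref{le:lifting}). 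The no-broken-rhombus hypothesis ensures that each such completion stays inside $[e,w]$, so the correspondence is well defined, and symmetry of the construction yields a bijection rather than merely an injection.

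For the direction ``broken rhombus implies non-regularity'', suppose $(x,u,v)$ is a broken rhombus with fourth corner $y \in W \setminus [e,w]$. The edge $u \to x$ inside $[e,w]$ would, within the surrounding dihedral reflection subgroup, be paired with an edge $v \to y$, and symmetrically for $u \to v$ versus $x \to y$. The absence of $y$ in $[e,w]$ breaks this pairing, which I would translate into a degree imbalance forcing at least one of $x$, $u$, $v$ to have degree strictly greater than $\ell(w)$, violating regularity.

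The main obstacle will be the edge-matching in the backward induction: one must handle edges through arbitrary reflections (not only simple ones), verify that every rhombus completion really lies in $[e,w]$ under the hypothesis, and track iterated applications of Lemma \ref{le:lifting} carefully enough to rule out silent counterexamples. Once the dihedral picture inside each two-reflection subgroup is fixed, however, the bookkeeping is routine.
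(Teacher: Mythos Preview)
The paper does not supply a proof of Theorem \ref{th:brokenrhombi}; it quotes the result from \cite[Theorem 5.3]{MR2802176} and remarks that it can also be extracted from Dyer's work \cite{dyer2001rank}. So there is no in-paper argument to compare your sketch against.

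On its own merits, your outline has the right opening move (reduce via Theorem \ref{th:CP}), but both combinatorial directions are underspecified in ways that are not merely bookkeeping. For ``no broken rhombus $\Rightarrow$ regular'', the edge bijection along a cover $z \lessdot z'$ is never actually constructed: you must say what reflection $\phi(t)$ is assigned to each $t$ with $zt \leq w$, prove injectivity, and check $z'\phi(t) \leq w$. The broken-rhombus hypothesis only constrains configurations $x \leftarrow u \rightarrow v$ with $u$ at the bottom; it speaks to up-edges from $z$, but your matching must also treat down-edges $zt < z$, where no rhombus of the required orientation is in sight. Moreover, Theorem \ref{th:brokenrhombi} is stated for arbitrary Weyl groups, so the dihedral subgroup $\langle t,\, z^{-1}z'\rangle$ may have $8$ or $12$ elements; a single ``fourth corner'' does not describe the Bruhat graph of such a coset, and you do not explain how a $4$-vertex condition controls it. For the converse, ``the absence of $y$ in $[e,w]$ breaks this pairing, which I would translate into a degree imbalance'' is not an argument: removing the top vertex of a rhombus deletes up-edges at $x$ and $v$, which if anything lowers degrees, and you give no mechanism forcing some degree to exceed $\ell(w)$. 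The obstacle you flag in your last paragraph is the whole theorem, not a detail.
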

\begin{Corollary}\label{co:brokenrhombi}
	Suppose $ W $ is simply laced and let $ w\in W $. 
	Then, the Schubert variety $ X_{w} $ is  smooth if and only if $ [e,w] $ contains no 
	broken rhombus.	
\end{Corollary}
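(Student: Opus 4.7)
The plan is to derive the corollary as an immediate consequence of two results already stated in this section. Theorem \ref{th:brokenrhombi} gives the equivalence between rational smoothness of $X_w$ and the non-existence of a broken rhombus in $[e,w]$, without any simply laced hypothesis. On the other hand, Theorem \ref{th:simplylaced} tells us that in the simply laced setting, smoothness and rational smoothness of Schubert varieties coincide. Chaining these two equivalences yields: in a simply laced Weyl group, $X_w$ is smooth iff $X_w$ is rationally smooth iff $[e,w]$ contains no broken rhombus.

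So the proof is just two lines, exactly paralleling the derivation of Corollary \ref{co:simplylaced}, which combines Theorem \ref{th:CP} with Theorem \ref{th:simplylaced} in the same manner. There is essentially no obstacle: the hard content is hidden in Carrell--Kuttler's equivalence of smoothness and rational smoothness (Theorem \ref{th:simplylaced}) and in the Dyer/Scott broken-rhombus criterion (Theorem \ref{th:brokenrhombi}); the corollary merely records the conjunction in the form that will be convenient for the main argument in Section \ref{sec:main}, where one wants to rule out smoothness of $X_w$ by exhibiting a single broken rhombus in $[e,w]$.
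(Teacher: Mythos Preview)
Your proposal is correct and matches the paper's approach exactly: the paper does not even include a proof of this corollary, since it is an immediate conjunction of Theorem~\ref{th:brokenrhombi} with Theorem~\ref{th:simplylaced}, precisely as you describe. Your remark that this parallels the derivation of Corollary~\ref{co:simplylaced} is also apt.
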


\begin{figure}[htb]
	\centering 
	\begin{tikzpicture}[
	thick, scale=0.7, acteur/.style={circle, fill=black, thick, inner sep=0pt, minimum size=0 cm}
	] 
	\node (a1) at (0,0) {$e$}; 
	\node (a2) at (2.2,4) {2}; 
	\node (a3) at (4.8,4) {4};
	\node (a4) at (-2,4) {3}; 
	\node (a5) at (-4,4) {1};	
	\node (a6) at (4.6,8) {12};
	\node (a7) at (6,8) {42}; 
	\node (a8) at (3,8) {32}; 	
	\node (a9) at (1.8,8) {34};
	\node (a10) at (-0.3,8) {14}; 
	\node (a11) at (-1.3,8) {13}; 
	\node (a12) at (-2.5,8) {24};
	\node (a13) at (-3.9,8) {23}; 
	\node (a14) at (-6,8) {21}; 
	\node (a15) at (7,12) {142};
	\node (a16) at (5,12) {342}; 
	\node (a17) at (3,12) {132}; 	
	\node (a18) at (1.4,12) {212};
	\node (a19) at (0.2,12) {242}; 
	\node (a20) at (-1.1,12) {232};	
	\node (a21) at (-2.7,12) {134};
	\node (a22) at (-4,12) {234};	
	\node (a23) at (-5.5,12) {214}; 
	\node (a24) at (-7,12) {213};
	\node (a26) at (4.7,16) {1342};
	\node (a27) at (2,16) {2142}; 
	\node (a28) at (0,16) {2132};
	\node (a29) at (-2.8,16) {2342};
	\node (a30) at (-5,16) {2134};
	\node (a31) at (0,20) {21342};	
	\draw[->] (a1) -- (a2); 
	\draw[->] (a1) -- (a3); 
	\draw[->] (a1) -- (a4); 
	\draw[->] (a1) -- (a5); 
	\draw[->] (a2) -- (a6); 	
	\draw[->] (a2) -- (a7); 
	\draw[->] (a2) -- (a8);
	\draw[->] (a3) -- (a10); 
	\draw[->] (a4) -- (a11);
	\draw[->] (a2) -- (a12);
	\draw[->] (a2) -- (a13);	
	\draw[->] (a2) -- (a14);	
	\draw[->] (a3) -- (a7);
	\draw[->] (a3) -- (a9); 
	\draw[->] (a3) -- (a12);
	\draw[->] (a4) -- (a8); 
	\draw[->] (a4) -- (a9); 
	\draw[->] (a4) -- (a11);
	\draw[->] (a4) -- (a13); 
	\draw[->] (a5) -- (a6); 
	\draw[->] (a5) -- (a10);
	\draw[->] (a5) -- (a11); 
	\draw[->] (a5) -- (a14); 
	\draw[->] (a6) -- (a15);
	\draw[->] (a6) -- (a17); 
	\draw[->] (a6) -- (a18); 
	\draw[->] (a7) -- (a15);
	\draw[->] (a7) -- (a16); 
	\draw[->] (a7) -- (a19); 
	\draw[->] (a8) -- (a16);
	\draw[->] (a8) -- (a17);
	\draw[->] (a8) -- (a20); 
	\draw[->] (a9) -- (a16);
	\draw[->] (a9) -- (a21); 
	\draw[->] (a9) -- (a22); 
	\draw[->] (a10) -- (a15);
	\draw[->] (a10) -- (a21); 
	\draw[->] (a10) -- (a23); 
	\draw[->] (a11) -- (a17);
	\draw[->] (a11) -- (a21); 
	\draw[->] (a11) -- (a24); 
	\draw[->] (a12) -- (a19);
	\draw[->] (a12) -- (a22); 
	\draw[->] (a12) -- (a23); 
	\draw[->] (a13) -- (a20);
	\draw[->] (a13) -- (a22); 
	\draw[->] (a13) -- (a24); 
	\draw[->] (a14) -- (a18);
	\draw[->] (a14) -- (a23);
	\draw[->] (a14) -- (a24); 
	\draw[->] (a15) -- (a26);
	\draw[->] (a15) -- (a27); 
	\draw[->] (a16) -- (a26); 
	\draw[->] (a16) -- (a29);
	\draw[->] (a17) -- (a26); 
	\draw[->] (a17) -- (a28); 
	\draw[->] (a18) -- (a27);
	\draw[->] (a18) -- (a28); 
	\draw[->] (a19) -- (a27); 
	\draw[->] (a19) -- (a29);
	\draw[->] (a20) -- (a28); 
	\draw[->] (a20) -- (a29); 
	\draw[->] (a21) -- (a26);
	\draw[->] (a21) -- (a30); 
	\draw[->] (a22) -- (a29); 
	\draw[->] (a22) -- (a30);
	\draw[->] (a23) -- (a27); 
	\draw[->] (a23) -- (a30); 
	\draw[->] (a24) -- (a28);
	\draw[->] (a24) -- (a30); 
	\draw[->] (a26) -- (a31); 
	\draw[->] (a27) -- (a31);
	\draw[->] (a28) -- (a31); 
	\draw[->] (a29) -- (a31); 
	\draw[->] (a30) -- (a31);
	\draw[->] (a1) -- (a18); 
	\draw[->] (a1) -- (a19); 
	\draw[->] (a1) -- (a20);
	\end{tikzpicture}
	\caption{The Bruhat graph of $ s_{2}s_{1}s_{3}s_{4}s_{2} \in W(D_{4})$.} 
	\label{fi:D4}  
\end{figure}
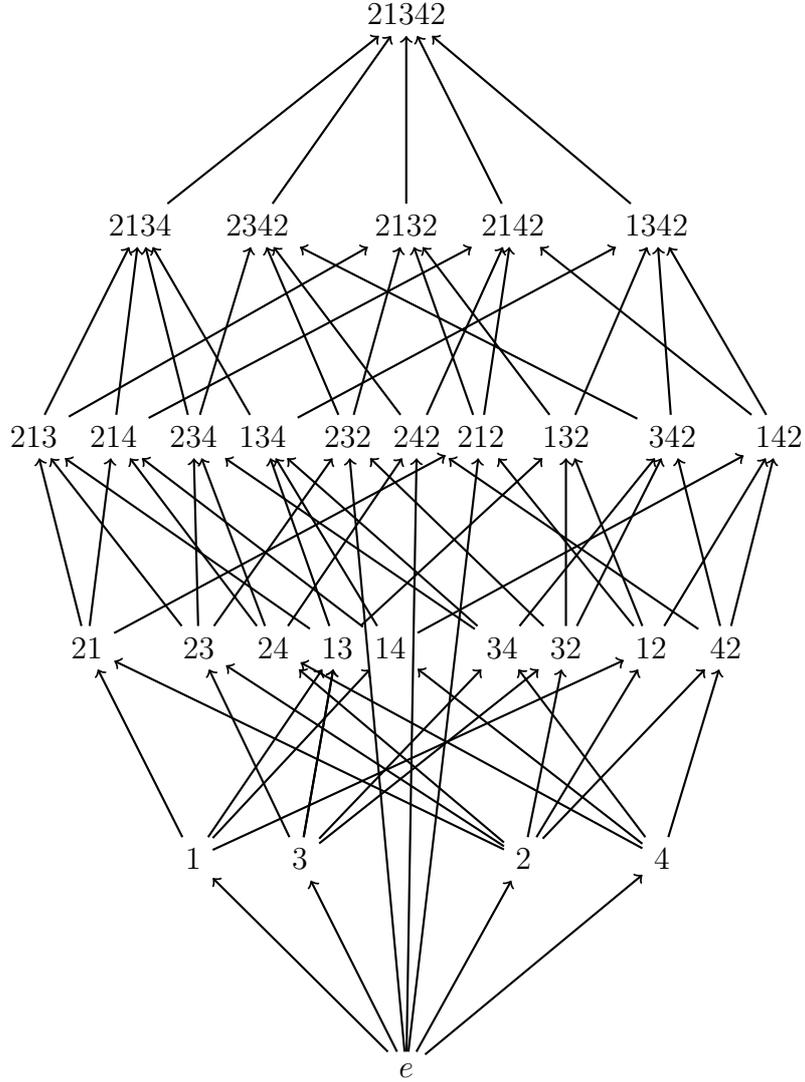

\section{Schubert varieties indexed by involutions} \label{sec:main}

In this section, which contains the main result, we will consider Schubert 
varieties indexed by involutions of finite simply laced groups.

Again let $(W,S)$ be an arbitrary Coxeter system. A {\em parabolic subgroup} of $W$ is a subgroup of the form $W_J = \langle J \rangle$ for $J\subseteq S$. If $ W_J $ is finite its longest element will be denoted by $ w_{0}(J) $.
 
For $ v\in W $ define $ S(v):=\{s \in S: s \leq v\} $. Then $W_{S(v)}$ is the minimal parabolic subgroup of $W$ which contains $v$.
 \begin{Theorem}\label{th:main}
	Suppose $ (W,S) $ is finite and simply laced  
	and let $ v\in W $ be an involution. Then the Schubert variety $ X_{v} $ is 
	smooth if and only if $ v=w_{0}(J) $ for some $ J \subseteq S $.
\end{Theorem}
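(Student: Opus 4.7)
The plan is to verify both directions using the Bruhat graph characterizations of Section~\ref{sec:prel}. The ``if'' direction follows from regularity of Bruhat graphs of parabolic subgroups, while the converse is a broken-rhombus argument in which the simply laced hypothesis enters through the structure of dihedral reflection subgroups.

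For the ``if'' direction, suppose $v = w_0(J)$ for some $J \subseteq S$. Then $[e, v] = W_J$, and by Theorem~\ref{th:bgisom} the Bruhat graph $\Bg{v}{S}$ equals $\Bg{W_J}{J}$. Every vertex of $\Bg{W_J}{J}$ has degree equal to the number of reflections of $W_J$, which is $\ell(w_0(J)) = \ell(v)$, so $\Bg{v}{S}$ is regular and $X_v$ is smooth by Corollary~\ref{co:simplylaced}.

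For the ``only if'' direction, assume $v$ is an involution with $v \neq w_0(J)$ for every $J$; the aim is to exhibit a broken rhombus in $[e, v]$ and then invoke Corollary~\ref{co:brokenrhombi}. A first reduction: since $[e, v] \subseteq W_{S(v)}$ and by Lemma~\ref{le:simplylaced} the subgroup $W_{S(v)}$ is simply laced, while Theorem~\ref{th:bgisom} lets us compute $\Bg{v}{S}$ inside $W_{S(v)}$, I may assume $S(v) = S$ and hence $v \neq w_0$. The longest element is characterized by having full left descent set, so $D_L(v) \subsetneq S$; moreover $D_L(v) = D_R(v)$ because $v = v^{-1}$. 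Pick $s \in S \setminus D_L(v)$ and any $r \in D_L(v)$, the latter existing as $v \neq e$. Then $s \leq v$ but $sv > v$ and $vs > v$; applying Lemma~\ref{le:lifting} to $s < v$ with the descent $r \neq s$ yields $rs \leq v$, and Lemma~\ref{le:invert} combined with $v = v^{-1}$ yields $sr \leq v$.

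When $m(r, s) = 3$, inside the parabolic $\langle r, s \rangle$ of type $A_2$ the pair $\{rs, sr\}$ has $rsr = srs$ as a common upper Bruhat-graph neighbor, and Lemma~\ref{le:dihedral} together with Lemma~\ref{le:simplylaced} forces this to be the only common upper neighbor in all of $W$: any alternative $y = rs \cdot t_1 = sr \cdot t_2$ would give $t_1 t_2 = rs$ with $t_1, t_2 \in T$, so $\langle r, s, t_1, t_2 \rangle$ would be a dihedral reflection subgroup of a simply laced group, forced to be of type $A_1 \times A_1$ or $A_2$ and thus, containing the $A_2$-subgroup $\langle r, s \rangle$, equal to $\langle r, s \rangle$ itself. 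Hence $(rs, r, sr)$ is a broken rhombus in $[e, v]$ exactly when $rsr \not\leq v$. The main obstacle is to select $r$ and $s$ so that $m(r, s) = 3$ and $rsr \not\leq v$ simultaneously: this can fail either because every descent commutes with every non-descent (removing the $A_2$-structure) or because $rsr \leq v$ holds for every candidate pair with $m(r, s) = 3$. In those regimes I would exploit the connectedness of the Coxeter diagram of each irreducible component of $(W, S)$: a chain of simple reflections $r = r_0, r_1, \ldots, r_k = s$ with $m(r_{i-1}, r_i) = 3$ links $D_L(v)$ to the chosen non-descent, and iterating the Lifting construction along this chain, together with the involution symmetry $u \mapsto u^{-1}$, should propagate a broken-rhombus construction until the missing corner is forced outside $[e, v]$.
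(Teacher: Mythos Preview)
Your ``if'' direction is fine and essentially matches the paper. The ``only if'' direction, however, has a genuine gap precisely at the point you flag as ``the main obstacle'': you never establish that $r\in D_L(v)$ and $s\notin D_L(v)$ can be chosen with $m(r,s)=3$ and $rsr\not\leq v$. The closing sentences (``iterating the Lifting construction along this chain \ldots should propagate a broken-rhombus construction until the missing corner is forced outside $[e,v]$'') are not an argument, and in fact the restriction to simple $r$ cannot work as stated. Take $v=s_2s_1s_3s_2=(1\,3)(2\,4)$ in $S_4$: here $D_L(v)=\{s_2\}$, and for either $s\in\{s_1,s_3\}$ one checks $s_2ss_2\leq v$ directly from the reduced word. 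Thus there is no broken rhombus of your shape $(rs,r,sr)$ with $r,s\in S$, yet $X_v$ is singular.

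The paper avoids this difficulty by not insisting that the second reflection be simple. It first disposes of the case $\deg_v(e)>\ell(v)$ via Corollary~\ref{co:simplylaced} (the example above falls here). If instead $\deg_v(e)=\ell(v)$, then Theorem~\ref{th:degree} applied to $sv$ (for $s\in S(v)$ with $sv>v$) forces the existence of a reflection $t$ with $t\leq sv$ but $t\not\leq v$. Lifting and the involution symmetry then give $st,ts\leq v$, and since $st\to t$ in the Bruhat graph the dihedral subgroup $\langle s,sts\rangle$ must be of type $A_2$ rather than $A_1\times A_1$. The broken rhombus is $(st,sts,ts)$, with missing apex $t$ itself. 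The ingredient you are lacking is exactly this use of the degree bound to manufacture a (typically non-simple) reflection $t$ that is guaranteed to lie outside $[e,v]$.
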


\begin{proof}
  The ``if'' assertion is obvious: $X_{w_0(J)}$ is a (smooth) flag variety. For the ``only if'' direction, let $ v $ be an involution which is not the longest element of any parabolic
	subgroup $ W_{J} $ of $ W $. Since $v\neq w_{0}(S(v)) $, there 
	exists $ s \in S(v) $ such that $v<sv $. If $\deg_v(e)\neq \ell(v)$, $\Bg{v}{S}$ is not regular and, by Corollary \ref{co:simplylaced}, $ X_{v} $ is not smooth. Thus, we may assume $ \deg_v(e)=\ell (v) $. Since $ \deg_{sv}(e) $ is the number of $ t\in T $ such that $ t\leq sv $ 
	and that degree is at least $ \ell(sv) $,  
	 there exists a reflection 
	$ t \leq sv $  such that $ t\nleq v $. Since $ t\leq sv $, 
	by Lemma \ref{le:invert},  $ t^{-1}\leq v^{-1}s $ which implies that 
	$ t\leq vs $. Also we must have $ st< t $. 
	To see this we use Lemma \ref{le:lifting} in
	the following way: Since $ s\in D_{L}(sv) $, 
	if we would have $ t< st $  
	then by using the lifting property this would imply that $ t\leqslant ssv=v $ 
	which is a contradiction. Now since $ st<t $, by Lemma \ref{le:invert} we see that $ ts<t $.
	Because $ ts< t $  and $ t \leq vs $, we have $ ts< vs $.  
	Since $ s \not\in D_{L}(st)$, we have $ s\in D_{L}(sv)\setminus D_{L}(st) $ and  
	then by Lemma \ref{le:lifting} we get $ st\le v $. 
	Consider the dihedral subgroup $ D =\langle s,sts \rangle$  of $ W $. 
	Since $ W $ is finite and simply laced, then by Lemma \ref{le:simplylaced}, 
	$ D $ is also simply laced. 
	Then $(D,X)$ is either of type $ A_{1}\times A_{1} $ or of type $ A_{2} $, where $X$ is as in Theorem \ref{th:refsubgroup}. 
	By Theorem \ref{th:bgisom} $\Bg{D}{S}$ equals the Bruhat graph of $(D,X)$. Since, moreover, $ st\rightarrow t $, $D$ is not of type $A_{1}\times A_{1}$ and hence $ D $ must be of type $ A_{2} $.
	Therefore, $ D=\left\lbrace  e,s,sts,ts,st,t\right\rbrace $, 
	$ \Bg{D}{S} \cong \Bg{W(A_{2})}{S(A_2)}$, and $ \Bg{D}{S} $ is as shown in Figure \ref{fi:bg(D)}. 
	
	\begin{figure}[H]
		\centering 
		\begin{tikzpicture}[
		thick,
		acteur/.style={
			circle,
			fill=black,
			thick,
			inner sep=0.5pt,
			minimum size=0.2cm
		}
		] 
		\node (a1) at (1,2.25) [acteur,label=$st $]{};
		\node (a2) at (1,0.75)[acteur,label=below:$sts$]{}; 
		\node (a4) at (0,0) [acteur,label=below:$ e $]{}; 
		\node (a5) at (-1,0.75) [acteur,label=below:$s$]{}; 
		\node (a6) at (-1,2.25) [acteur,label=$ts $]{}; 
		\node (a9) at (0,3) [acteur,label=$ t $]{}; 
		\draw[<-] (a1) -- (a2);  
		\draw[->] (a5) -- (a1);
		\draw[<-] (a2) -- (a4);
		\draw[<-] (a5) -- (a4);
		\draw[->] (a6) -- (a9);
		\draw[->] (a1) -- (a9);
		\draw[->] (a5) -- (a6);
		\draw[<-] (a9) -- (a4);
		\draw[<-] (a6) -- (a2);
		\end{tikzpicture}
		\caption{The Bruhat graph of $D$.} \label{fi:bg(D)} 
	\end{figure}
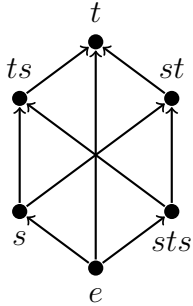
	Now $ ts,st \le v $ but $ t \not \leq v$. From Figure 3,  $ (st,sts,ts) $ is a broken rhombus 
	of $ [e,v] $ because there is no 
	$ x \le v $ such that there are directed edges from $ st $ and $ ts $ 
	to $ x $. To see this, suppose that 
	$ st\rightarrow x\leftarrow ts $. So there exist $ t', t'' \in  T$ with $ t'\neq t''  $ 
	such that $ stt'=tst'' $. Then $ t't''=tsts \neq e$. 
	By Lemma \ref{le:dihedral} we therefore have a dihedral subgroup $ W'= \langle sts,t,t',t'' \rangle$ of $ W $, and $ W' $ is simply laced since $ W $ is (by Lemma \ref{le:simplylaced}). Clearly $ D\subseteq W' $. 
	Since $ W' $ has no more than six elements, $ W'=D $. So $ stt'=x\in D $. 
	Since there is a directed edge 
	from $ st $ to $ x $,  $ x=t \nleq v$.  Since $ (st,sts,ts) $ 
	is a broken rhombus of $ [e,v] $, by Corollary \ref{co:brokenrhombi}, $ X_{v} $ 
	is not smooth.
	\end{proof}

	When $ W $ is not simply laced, there may exist an involution 
	$ w\in W $ which is not the longest element of any parabolic subgroup of $ W $ but for which $ X_{w} $ is smooth. For example, in type $ C_{2} $ there are two involutions of length three. One of them indexes a smooth Schubert variety (and, as was mentioned above, the other one indexes a rationally smooth but not smooth Schubert variety). This example shows that Theorem \ref{th:main} cannot be extended to multiply laced types.  
	We do, however, not know what happens in infinite simply laced types.
	\bibliographystyle{amsplain}
	\bibliography{invschubert}
\end{document}